\def\real{\hbox{\rm\setbox1=\hbox{I}\copy1\kern-.45\wd1 R}}
\def\natural{\hbox{\rm\setbox1=\hbox{I}\copy1\kern-.45\wd1 N}}
\newtheorem{theorem}{Theorem}[section] 
\newtheorem{lemma}[theorem]{Lemma}     
\newtheorem{corollary}[theorem]{Corollary}
\title{Mixing Sets for Non-mixing Transformations}
\author[T. M. Adams]{Terrence M. Adams}
\address{U.S. Government\\9800 Savage Rd.\\Ft. Meade, MD 20755}
\email{terry@ganita.org}
\date{\today}
\begin{document}

\begin{abstract} 
For different classes of measure preserving transformations, 
we investigate collections of sets that exhibit the property 
of lightly mixing.  Lightly mixing is a stronger property 
than topological mixing, and requires that a $\liminf$ 
is positive.  In particular, we give a straightforward proof that 
any mildly mixing transformation $T$ 
has a dense algebra $\mathcal{C}$ such that $T$ is lightly mixing 
on $\mathcal{C}$.  Also, we provide a hierarchy for the properties 
of lightly mixing, sweeping out and uniform sweeping out 
for dense collections, and dense algebras of sets. 
As a result, stronger mixing realizations are given for several types 
of transformations than those given by previous extensions 
of the Jewett-Krieger Theorem. 
\end{abstract}

\subjclass[2010]{Primary 37A25; Secondary 28D05}
\keywords{Mildly Mixing, Lightly Mixing, Dense, Algebra, Sweeping Out}

\maketitle

\section{Introduction}
The notion of lightly mixing was introduced as intermixing by Peter Walters in \cite{Walt72}. 
In \cite{BCQ74}, an alternate formulation of lightly mixing, called sequence mixing, 
was introduced, and it was observed that sequence mixing is equivalent to lightly mixing. 
Also, a transformation was constructed in \cite{BCQ74} which is lightly mixing, 
but not partially mixing.  This is accomplished by taking the countable product 
of a non-mixing, partially mixing transformation. 

It is easy to verify that any finite product of partially mixing transformations will still 
be partially mixing.  This follows since the usual definition of partial mixing extends 
from a dense collection of sets to the sigma algebra.  Other properties such as strong mixing, 
mild mixing and weak mixing also extend from a dense collection, when the appropriate 
definition is taken.  However, we will give general results showing that the usual definition 
of lightly mixing and sequence mixing do not extend from a dense collection 
to the generated sigma algebra. 

In section \ref{prelim}, we define the notion of lightly mixing on a collection of sets, 
and in section \ref{lightsec}, 
we prove the general result that any measure preserving transformation is lightly mixing 
on a dense collection of sets.  If a transformation is rigid, then it cannot be lightly mixing 
on a collection which includes a set and its complement.  Thus, no rigid transformation 
is lightly mixing on a dense algebra.  However, we show that any invertible measure 
preserving transformation with a mildly mixing factor is lightly mixing on a dense algebra. 

We also consider the property of sequence mixing which we call sweeping out.  
We show that sweeping out on a dense collection implies lightly mixing on a dense algebra. 
However, the converse does not hold.  In particular, the well known Chacon's transformation 
\footnote{The Chacon-3 transformation can be defined by $B_{n+1} = B_n B_n 1 B_n$ and $B_0 = (0)$.} 
does not sweep out on a dense collection.  Thus, since Chacon's transformation is mildly mixing, 
it is lightly mixing on a dense algebra.  
We give several other examples exhibiting the rich behavior that is possible. 

\section{Preliminaries}
\label{prelim}
A transformation on a separable probability space $(X, \mathcal{B}, \mu)$ is a measurable and invertible 
map $T:X \to X$. 
A transformation $T$ is measure preserving if for each set $A\in \mathcal{B}$, $\mu (T^{-1}A)=\mu (A)$. 
$T$ is ergodic if $TA=A$ implies $\mu(A)$ equals 0 or 1. 

Given a collection $\mathcal{C}$ of measurable sets, $T$ is lightly mixing on $\mathcal{C}$, 
if for each $A$ and $B\in \mathcal{C}$ having positive measure,
\[
\liminf_{n\to \infty} \mu (T^n A\cap B) > 0. 
\]
$T$ is sweeping out on $\mathcal{C}$, 
if for each $A\in \mathcal{C}$ with positive measure 
and each infinite sequence $k_1, k_2, \ldots $, 
\[
\mu (\bigcup_{i=1}^{\infty} T^{k_i} A) = 1.
\]
Equivalently, $T$ is sweeping out on $\mathcal{C}$, 
if for each $A \in \mathcal{C}$ and $B\in \mathcal{B}$, both with positive measure, 
\[
\liminf_{n\to \infty} \mu (T^n A\cap B) > 0. 
\]
$T$ is lightly mixing on $\mathcal{B}$ if and only if $T$ is sweeping out on $\mathcal{B}$.  
In this case, we will just say that $T$ is lightly mixing (LM).  Transformation $T$ is partially mixing, 
if there exists $\alpha > 0$ such that for all sets $A$ and $B$, 
\[
\liminf_{n\to \infty} \mu (T^nA\cap B) > \alpha \mu (A) \mu (B). 
\]

Consider $\mathcal{B}$ as a separable metric space with the distance 
$d(A,B) = \mu (A\triangle B)$.  
We will say $T$ is {\bf lightly mixing on a dense collection} (LMDC), if there exists a dense subset 
$\mathcal{D} \subset \mathcal{B}$ such that $T$ is lightly mixing on $\mathcal{D}$.  
Similarly, we define {\bf lightly mixing on a dense algebra} (LMDA), 
{\bf sweeping out on a dense collection} (SODC), and 
{\bf sweeping out on a dense algebra} (DODA).  Clearly, LMDA implies LMDC and SODA implies SODC. 
In Corollary \ref{sodc_lmda}, we show that SODC implies LMDA.  Thus, we may list the properties from strongest 
to weakest as: SODA, SODC, LMDA, LMDC. 

While none of the properties LMDC, LMDA, SODC or SODA are sufficient to imply 
$T$ is lightly mixing, there are conditions which may be used to verify $T$ is lightly mixing.  
For example, if for all sets $A$ and $B$ of positive measure there exists an integer $N$ 
such that for $n\geq N$, $\mu (T^n A\cap B) > 0$, then $T$ is lightly mixing. 
(If the $\liminf$ was 0, then wait for the intersection to be less than ${\mu(B)}/{2^n}$ 
and subtract that part from $B$.)  If $T$ is ergodic and for each set $A$ of positive measure 
there exists $N$ such that for $n\geq N$, $\mu(T^n A\cap A) > 0$, then $T$ is lightly mixing. 
These properties imply that if $T$ is lightly mixing for a measure $\nu$, then $T$ is 
also lightly mixing for an equivalent measure $\mu$.  If $T$ is lightly mixing for a measure $\nu$, 
then $T$ has no weakly wandering set of positive measure.  (Set $A$ is weakly wandering 
for $T$ if there exists an infinite sequence $n_k$ such that $T^{n_k}A\cap T^{n_{k+1}}A = \emptyset$ 
for $k\in \natural$.)  Therefore, $T$ has a unique equivalent invariant measure $\mu$.  
See \cite{Fri70} for a discussion on existence of equivalent invariant measures.  Thus the lightly mixing 
property persists when we change to an equivalent invariant measure. 

The lightly mixing property we discuss here may seem to be reminiscent of the property 
of topological mixing.  A transformation $T$ is topologically mixing if for any two open sets 
$U$ and $V$, there exists $N$ such that for $n\geq N$, $\mu(T^n U\cap V)>0$.  
An extension of the Jewett-Krieger Theorem in \cite{Lehrer87} proves that any ergodic measure preserving 
transformation has a topological model which is a topologically mixing homeomorphism on a compact 
metric space.  In particular, topological mixing is not an isomorphism invariant.  
Yet, the properties LMDC, LMDA, SODC and SODA are all isomorphic invariants.  
Thus, there are transformations which are topologically mixing, but not LMDA.  
Also, example 1 in section \ref{exs} of this paper is LMDA, but not topologically mixing.  
We include the proof that example 1 is LMDA and see \cite{Pet89} for a proof that 
example 1 is not topologically mixing. 

\section{Lightly Mixing on a Dense Collection}
\label{lightsec}
In Theorem \ref{thm1}, we prove that any ergodic measure preserving system on a separable 
probability space is lightly mixing on a dense collection of sets.  
The ergodicity condition is dropped in Theorem \ref{thm2}. 
\begin{theorem} 
\label{thm1}
Let $T$ be an ergodic measure preserving transformation on a separable probability space 
$(X, \mathcal{B}, \mu)$.  There exists a dense collection $\mathcal{D} \subset \mathcal{B}$ such that 
\[
\liminf_{n\to \infty} \mu(A\cap T^n B) > 0 
\]
for all $A, B\in \mathcal{D}$. 
\end{theorem}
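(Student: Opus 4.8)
The plan is to build $\mathcal{D}$ as a countable dense set of "thickened" sets, each of which has small but positive measure everywhere in a sense that forces recurrence. The key structural fact I would exploit is that an ergodic transformation on a nonatomic space has, for every set $E$ of positive measure and every $\varepsilon>0$, a subset $E'\subset E$ with $0<\mu(E')<\varepsilon$ — and more importantly, I want sets that are "spread out" under the dynamics. The cleanest route is a Rokhlin tower: for each $n$ choose a Rokhlin tower of height $h_n\to\infty$ with error set of measure $<1/n$, and from a fixed countable dense sequence $\{B_k\}\subset\mathcal{B}$ produce approximants $B_k^{(n)}$ that are unions of whole tower levels. Then I would enlarge each such approximant by throwing in one extra thin level (or a thin sliver from every level) so that the resulting set $A$ has the property that $T^jA\cap A$ has positive measure for \emph{every} $j$ in a syndetic set — indeed, if $A$ contains level $i$ and level $i+j$ of a tower of height $h$, then $T^{j}(\text{level }i)\subset \text{level }i+j$ gives a positive-measure overlap for all $j$ up to $h-i$. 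The collection $\mathcal{D}$ is then the (countable) family of all finite unions of such enriched tower sets coming from all the towers; density is immediate because the $B_k^{(n)}$ already approximate a dense sequence and the enrichment changes measure by at most $2/n$.

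The steps, in order: (1) Reduce to the nonatomic case (if there are atoms, ergodicity forces $X$ to be a single finite orbit, where the statement is trivial since $T$ is then periodic and $\liminf$ equals $\mu(A\cap B)$ along a periodic subsequence — handle this separately or note the finite case directly). (2) Fix a dense sequence $\{B_k\}$ in $(\mathcal{B},d)$. (3) For each pair $(k,n)$, use the Rokhlin Lemma to get a tower $\tau_{k,n}$ of height $h_{k,n}\geq n$ with base $F_{k,n}$, and approximate $B_k$ within $1/n$ by a set $\widetilde B_{k,n}$ that is a union of complete levels $T^iF_{k,n}$, $i\in S_{k,n}\subset\{0,\dots,h_{k,n}-1\}$. (4) Define the enriched set $A_{k,n}$ by adjoining to $\widetilde B_{k,n}$ a small positive-measure sub-slice of \emph{every} level of the tower (say, $\mu$-proportion $\delta_{k,n}$ of each level, with $\delta_{k,n}$ chosen so the total added measure is $<1/n$), so that $A_{k,n}$ genuinely meets every level. (5) Let $\mathcal{D}$ be the countable family of all finite unions of the $A_{k,n}$ together with $\emptyset$ and $X$; it is dense because the $A_{k,n}$ are. (6) Verify the $\liminf$ condition: given $A,B\in\mathcal{D}$ of positive measure, each is a finite union of enriched tower sets; picking one enriched tower set $A'$ inside $A$ and one $B'$ inside $B$ (coming from towers $\tau$ of height $h$ and $\tau'$ of height $h'$), I use that $A'$ contains a fixed fraction of \emph{every} level of $\tau$ and likewise $B'$ of $\tau'$; combined with the fact that the base of $\tau'$ returns to the base of $\tau$ with positive frequency (ergodicity, via Kac / the ergodic theorem applied to return-time functions), one gets that $\mu(T^nA'\cap B')$ stays bounded below along \emph{all} sufficiently large $n$, not merely a subsequence. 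The passage from "two nested towers of finite but unbounded heights" to a genuine $\liminf$ over all $n\in\mathbb{N}$ is what requires care: one wants a \emph{single} lower bound valid for all large $n$, which means the two towers (for $A'$ and $B'$) must be chosen with a common refinement, or one uses a diagonal/exhaustion argument over a sequence of towers whose heights $\to\infty$ and whose bases are nested, so that any large $n$ is "captured" inside some tower in the sequence.

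The main obstacle I anticipate is precisely this last point: a naive tower construction gives $\mu(T^nA'\cap B')>0$ only for $n$ in a bounded range dictated by the tower height, so to get a positive $\liminf$ over \emph{all} $n$ one must arrange infinitely many towers (heights $\to\infty$) sharing a nested sequence of bases, and show the overlap bound does not degrade to $0$ as the height grows — i.e. the "fraction of each level" included in the enriched sets must be chosen uniformly bounded below across the whole nested family, while the added measure still goes to $0$. Reconciling "uniform lower bound on the per-level fraction" with "total added measure $\to 0$" is possible because the levels themselves shrink, but it forces the enriched sets in $\mathcal{D}$ to be defined via a coherent nested sequence of towers rather than one tower at a time; organizing this bookkeeping cleanly is the technical heart of the argument. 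An alternative that sidesteps towers entirely would be to use a generator and approximate by cylinder sets, then thicken in the symbolic picture, but the tower approach keeps the measure estimates most transparent. I would present the tower version, isolating the "uniform per-level density in a nested tower sequence" construction as the one lemma that does the real work.
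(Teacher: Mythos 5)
Your construction never actually closes the gap that you yourself identify as ``the technical heart.'' The difficulty is to produce, for two sets $A'$ and $B'$ built from \emph{different} towers, a lower bound on $\mu(T^nA'\cap B')$ valid for \emph{every} large $n$, and the tools you invoke for this — Kac's theorem and ``the base of $\tau'$ returns to the base of $\tau$ with positive frequency'' — can only give Ces\`aro-type information ($\liminf_N \frac1N\sum_{n\le N}\mu(T^nA'\cap B')>0$, which already follows from ergodicity) rather than a bound along all $n$. Your substitute idea, nested towers with a uniformly bounded per-level density of slivers, does not repair this: a set consisting of a $\delta$-sliver of every level has no \emph{bounded return time} property (a point lying over the complement of the sliver in the base never meets the sliver while it climbs the tower), and without a set that almost every orbit visits within a fixed number $h$ of steps you cannot convert ``positive measure overlap for $j$ up to the tower height'' into a statement about all $n$. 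Likewise your observation that $T^j(\text{level }i)\subset\text{level }i+j$ only controls self-intersections at times $j\le h$, not the tail behaviour as $n\to\infty$. (A smaller error: in the atomic case ergodicity gives a finite cyclic permutation of $N$ equal atoms, where a dense collection must contain the singletons and $\liminf_n\mu(T^nA\cap B)=\min_{0\le r<N}\mu(T^rA\cap B)=0$ for disjoint rotates; the statement is not ``trivially true'' there but simply outside the intended aperiodic setting.)

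The paper's proof resolves the two-tower problem with an asymmetric pairing that your construction lacks. For each $h$ it sets $D_h=B_h\cup E_h\cup A_h$, where $B_h$ is the \emph{full} base of a height-$h$ tower, $E_h$ is the \emph{full} error set, and $A_h=\bigcup_{i=0}^{h-1}T^i\bar{A_h}$ is a thin column with $\mu(\bar{A_h})=\mu(B_h)/h$. The point is that $\bigcup_{i=0}^{h-1}T^{-i}(B_h\cup E_h)=X$ up to null sets — every orbit hits base-plus-error within $h$ steps, a property a union of slivers does not have — while $A_k$ for $k\ge h$ contains a disjoint union of $h$ iterates of $\bar{A_k}$. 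A one-line lemma then gives $\mu(D_k\cap T^nD_h)\ge\mu(\bar{A_k})$ for \emph{every} $n$, with no nested towers, common refinements, or frequency arguments. Density is also obtained more cheaply than by your approximation of a dense sequence: since $\mu(D_h)\to 0$, the collection $\mathcal{D}=\{D: D_h\subset D\text{ for some }h\}$ is dense because $G\cup D_h$ approximates any $G$, and the pairwise bound on the $D_h$ passes to all of $\mathcal{D}$ by monotonicity. If you want to salvage your outline, the essential fixes are to include the entire base and the entire error set (not slivers, and not just unions of levels without the error set) in each generating set, and to run the estimate asymmetrically between a tall thin column and a short tower's base.
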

First we state and prove the following lemma which is used to construct the sets in $\mathcal{D}$. 
\begin{lemma}
\label{lem1}
Let $T$ be measure preserving on $(X, \mathcal{B}, \mu)$.  
Suppose for sets $A$ and $B$ there exist a set $\bar{A}$ and natural number $h$ such that 
$\mu ( \bigcup_{i=0}^{h-1} T^{-i} B ) = 1$ and 
$\bigcup_{i=0}^{h-1} T^i \bar{A}$ is a disjoint union contained inside $A$.  Then for all $n\in \natural$, 
\[
\mu(A\cap T^n B) \geq \mu (\bar{A}) .
\]
\end{lemma}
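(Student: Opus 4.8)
The plan is to exploit the two hypotheses — that the $h$ translates $T^i\bar A$ ($0\le i\le h-1$) are pairwise disjoint and sit inside $A$, and that the $h$ preimages $T^{-i}B$ ($0\le i\le h-1$) cover $X$ up to measure zero — together with measure preservation of $T$. The key observation is that for any fixed $n$, exactly one of the sets $T^{n}B, T^{n+1}B,\dots,T^{n+h-1}B$ "activates" at each point of $X$ in the following sense: since $\mu\bigl(\bigcup_{i=0}^{h-1}T^{-i}B\bigr)=1$, applying $T^{n+i}$ (which is measure preserving) gives $\mu\bigl(\bigcup_{i=0}^{h-1}T^{n}(T^{-i}B)\bigr)=\mu\bigl(\bigcup_{j=0}^{h-1}T^{n-j}B\bigr)=1$ after reindexing; more usefully, translating the covering hypothesis by $T^{n+h-1}$ shows $\mu\bigl(\bigcup_{i=0}^{h-1}T^{n+i}B\bigr)=1$. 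So the $h$ consecutive translates $T^{n}B,\dots,T^{n+h-1}B$ cover $X$ (mod $0$).

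First I would fix $n\in\natural$ and use measure preservation to write $\mu(\bar A)=\mu(T^{i}\bar A)$ for each $i$, and $\mu(A\cap T^nB)=\mu(T^{i}A\cap T^{n+i}B)$. The idea is then to intersect the disjoint union $\bigsqcup_{i=0}^{h-1}T^{i}\bar A\subseteq A$ with the full-measure cover $\bigcup_{i=0}^{h-1}T^{n+i}B$. Concretely, for each $i\in\{0,\dots,h-1\}$ the set $T^{i}\bar A$ is covered mod $0$ by $\bigcup_{j=0}^{h-1}T^{n+i+j}B$ (apply the previous paragraph's cover shifted by $T^{i}$); hence $T^{i}\bar A$ is partitioned mod $0$ into the pieces $T^{i}\bar A\cap T^{n+i+j}B$ for $j=0,\dots,h-1$, and summing gives $\mu(\bar A)=\mu(T^i\bar A)=\sum_{j=0}^{h-1}\mu(T^{i}\bar A\cap T^{n+i+j}B)$ — minus overlaps, so actually $\le$. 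Rather than chase overlaps, the cleaner route is: since $\bigcup_{i=0}^{h-1}T^{-i}B$ has full measure, the set $\bar A\setminus\bigcup_{i=0}^{h-1}T^{-n-i}B$ is null, so $\mu(\bar A)=\mu\bigl(\bar A\cap\bigcup_{i=0}^{h-1}T^{n+i}B\bigr)$... wait — I want $\bar A$ to meet $T^{n}B$ itself, not a translate.

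The correct mechanism is a shifting/averaging argument over the $h$ disjoint copies. For each $i=0,\dots,h-1$, intersect $T^{i}\bar A$ with $T^{n+i}B$; by measure preservation $\mu(T^{i}\bar A\cap T^{n+i}B)=\mu(T^{i}(\bar A\cap T^{n}B))=\mu(\bar A\cap T^{n}B)$ — hmm, that only gives the fixed quantity back. So instead I would argue: the sets $C_i:=T^{i}\bar A\cap T^{n}B$, $i=0,\dots,h-1$, are pairwise disjoint (being subsets of the disjoint $T^{i}\bar A$) and all contained in $A\cap T^{n}B$; thus $\mu(A\cap T^nB)\ge\sum_{i=0}^{h-1}\mu(T^{i}\bar A\cap T^nB)=\sum_{i=0}^{h-1}\mu(\bar A\cap T^{-i}(T^nB))=\sum_{i=0}^{h-1}\mu\bigl(\bar A\cap T^{n-i}B\bigr)$. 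Reindex by $m=n-i$... this needs $\mu\bigl(\bigcup_{i=0}^{h-1}T^{-i}(T^{n}B)\bigr)=\mu\bigl(\bigcup_{i=0}^{h-1}T^{n-i}B\bigr)=1$, which follows from the covering hypothesis applied at $T^{n}$ (measure preservation again). Since $\bar A$ has full-measure intersection with this union, $\sum_{i=0}^{h-1}\mu(\bar A\cap T^{n-i}B)\ge\mu\bigl(\bar A\cap\bigcup_{i=0}^{h-1}T^{n-i}B\bigr)=\mu(\bar A)$, giving the claim for every $n$ uniformly, hence for the $\liminf$.

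The main obstacle is purely bookkeeping: getting the index bundling right so that "one disjoint copy of $\bar A$ per covering preimage of $B$" lines up, and confirming the covering hypothesis survives being pushed forward by an arbitrary power $T^{n}$ (it does, immediately, by measure preservation and the fact that $T$ is invertible). No genuine analytic difficulty arises — the estimate is in fact a uniform lower bound in $n$, which is stronger than what the $\liminf$ conclusion requires.
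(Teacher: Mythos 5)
Your final paragraph is exactly the paper's argument, just read in the opposite direction: decompose $A\cap T^nB$ over the disjoint pieces $T^i\bar A\cap T^nB$, pull each back by $T^{-i}$ via measure preservation, and use subadditivity together with the fact that $\bigcup_{i=0}^{h-1}T^{n-i}B$ has full measure to bound the sum below by $\mu(\bar A)$. The proposal is correct and essentially identical to the paper's proof; the earlier meandering (the discarded $T^{n+i}B$ attempts) does not affect the final argument.
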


\proof[{\it Proof of Lemma \ref{lem1}}] 
\begin{eqnarray*}
\mu (\bar{A}) &=& \mu (\bar{A} \cap T^n (\bigcup_{i=0}^{h-1} T^{-i} B)) \\ 
&=& \mu (\bigcup_{i=0}^{h-1} (\bar{A} \cap T^{-i} T^n B)) \leq \sum_{i=0}^{h-1} \mu(\bar{A} \cap T^{-i} T^n B) \\
&=& \sum_{i=0}^{h-1} \mu(T^i \bar{A} \cap T^n B) \leq \mu (A\cap T^n B) .\ \Box 
\end{eqnarray*} 

\proof[Proof of Theorem \ref{thm1}] 
For each natural number $h$, let $B_h$ be the base of a Rohlin tower of height $h$ such that 
$E_h = X\setminus (\bigcup_{i=0}^{h-1} T^i B_h)$ satisfies $\mu (E_h) < 1 / h$.  
Choose set $\bar{A_h} \subset B_h$ such that $\mu (\bar{A_h}) = (1/h)\mu(B_h)$ and define 
$A_h = \bigcup_{i=0}^{h-1} T^i \bar{A_h}$.  
Define $D_h = B_h \cup E_h \cup A_h$ for $h\in \natural$.  By Lemma \ref{lem1}, for $h\leq k$ we have 
\[
\mu(D_k \cap T^n D_h) \geq \mu(A_k \cap T^n (B_h \cup E_h)) \geq \mu(\bar{A_k}) 
\]
for $h\in \natural$.  Therefore $T$ is lightly mixing on the collection 
\[
\mathcal{D} = \{ D\in \mathcal{B}: \mbox{there exists $h\in \natural$ such that $D_h \subset D$}\} .\ \Box 
\]

In the next theorem, we are able to drop the ergodicity condition by looking at the ergodic components of $T$. 

\begin{theorem}
\label{thm2} 
If $T$ is a measure preserving transformation on a separable probability space, then $T$ is lightly mixing 
on a dense collection. 
\end{theorem}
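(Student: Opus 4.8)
The plan is to reduce Theorem \ref{thm2} to Theorem \ref{thm1} by passing to the ergodic decomposition of $T$. Write $(X, \mathcal{B}, \mu) = \int (X_\omega, \mathcal{B}_\omega, \mu_\omega)\, d\mathbb{P}(\omega)$, where $T$ acts ergodically on each fiber $(X_\omega, \mu_\omega)$. The idea is that for each $\omega$, Theorem \ref{thm1} produces a dense lightly-mixing collection $\mathcal{D}_\omega \subset \mathcal{B}_\omega$, and one wants to glue these together into a single dense collection in $\mathcal{B}$ on which $T$ is lightly mixing. The danger is that doing this $\omega$-by-$\omega$ requires a measurable selection, and the resulting $\liminf$ estimates could degrade to $0$ when integrated over $\omega$. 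So rather than invoke the decomposition abstractly, I would run the Rohlin-tower construction from the proof of Theorem \ref{thm1} directly on $X$, but relative to the ergodic-component factor.

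Concretely: let $\mathcal{E} \subset \mathcal{B}$ be the (sub-)$\sigma$-algebra of $T$-invariant sets. First I would establish a version of Lemma \ref{lem1} that works fiberwise. Since $T$ need not be ergodic, one cannot expect a single Rohlin tower of height $h$ whose error set $E_h$ has measure $< 1/h$; instead, by the Rohlin lemma for (aperiodic parts of) non-ergodic transformations, for each $h$ one still gets a tower with base $B_h$ and error set $E_h$ with $\mu(E_h) < 1/h$, together with the key property that $\bigcup_{i=0}^{h-1} T^{-i}(B_h \cup E_h)$ covers almost every point — in fact covers $X$ minus the periodic part. The subtlety is that, on a given ergodic component, the relevant $h$ for which the tower-cover hypothesis of Lemma \ref{lem1} holds may depend on $\omega$; but the hypothesis of Lemma \ref{lem1} only needs $\mu(\bigcup_{i=0}^{h-1} T^{-i} B) = 1$ for the \emph{specific} sets $B = D_h \supset B_h \cup E_h$ we build, and a tower of height $h$ over base $B_h$ with small error automatically gives $\bigcup_{i=0}^{h-1} T^{-i}(B_h \cup E_h) = X$ a.e. regardless of ergodicity. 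So the estimate $\mu(D_k \cap T^n D_h) \ge \mu(\bar{A_k})$ from the proof of Theorem \ref{thm1} goes through verbatim, with $D_h = B_h \cup E_h \cup A_h$, $A_h = \bigcup_{i=0}^{h-1} T^i \bar{A_h}$, $\bar{A_h} \subset B_h$ of measure $(1/h)\mu(B_h)$.

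Then the dense collection is again $\mathcal{D} = \{D \in \mathcal{B} : D_h \subset D \text{ for some } h\}$: density holds because $\mu(D_h) = \mu(B_h) + \mu(E_h) + \mu(A_h) \to 0$-complement issues are handled exactly as before (the $D_h$ are a decreasing-in-measure-of-complement family, or at least sufficiently small to approximate any set from inside after intersecting), and for $D, D' \in \mathcal{D}$ with, say, witnesses $h \le k$, we get $\mu(D' \cap T^n D) \ge \mu(D_k \cap T^n D_h) \ge \mu(\bar{A_k}) > 0$ uniformly in $n$. Thus $\liminf_n \mu(D' \cap T^n D) > 0$.

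The main obstacle is really just the non-ergodic Rohlin lemma and making sure the error set $E_h$ is handled correctly: one must confirm that even without ergodicity the tower can be built with $\mu(E_h) < 1/h$ and that $\{B_h, TB_h, \dots, T^{h-1}B_h, E_h\}$ still cover $X$ up to null sets (absorbing any periodic part of $T$ into $E_h$, which is harmless since we only need the covering property, not that $E_h$ be small on each component). Once that is in place, the argument is a line-for-line repetition of the proof of Theorem \ref{thm1}, so I would keep the write-up short: state the non-ergodic Rohlin lemma, note that Lemma \ref{lem1} applies unchanged, and copy the three-line computation. $\Box$
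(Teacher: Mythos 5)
Your reduction to a global Rohlin-tower argument is sound on the aperiodic part of $T$: you are right that the covering hypothesis of Lemma \ref{lem1}, namely $\mu\bigl(\bigcup_{i=0}^{k-1}T^{-i}(B_h\cup E_h)\bigr)=1$ for $k\geq h$, follows from the partition property of the tower alone and uses no ergodicity, so the three-line computation from Theorem \ref{thm1} does go through there. The genuine gap is the periodic part, and your parenthetical fix (``absorb it into $E_h$, which is harmless'') is not harmless. If the set $P$ of points of period at most $p$ has positive measure (e.g.\ $T$ equal to the identity on a set of measure $1/2$, or on all of $X$), then any Rohlin tower of height $h>p$ misses $P$ entirely, since $P$ is invariant and $T^{p!}=\mathrm{id}$ on $P$ forces $\mu(B_h\cap P)=0$. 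Hence $E_h\supseteq P$ modulo null sets and $\mu(E_h)\geq\mu(P)$ for all large $h$; you cannot simultaneously have $\mu(E_h)<1/h$. The covering property still holds, so the $\liminf$ estimate survives, but density does not: every $D$ in your collection contains some $D_h\supseteq E_h\supseteq P$, so $\mu(\emptyset\triangle D)\geq\mu(P)$ and $\mathcal{D}$ cannot approximate small sets. Since the theorem as stated includes such $T$, this is a real hole, not a technicality.

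The paper avoids the issue with a dichotomy rather than a global tower. Either $T$ admits invariant sets of arbitrarily small positive measure, in which case one chooses a disjoint sequence $E_m$ of invariant sets with $\sum_m\mu(E_m)<\infty$ and takes the dense collection of all sets containing a tail $\bigcup_{i\geq m}E_i$; these lightly mix with one another for free because the tails are invariant and any two members share a tail of positive measure. Or else the infimum of measures of invariant sets of positive measure is positive, so $T$ has an ergodic component of positive measure, and Theorem \ref{thm1} applied to that single component already produces sets $D_h$ with $\mu(D_h)\to 0$, hence a dense lightly mixing collection in all of $\mathcal{B}$. Note that a non-atomic periodic part falls squarely into the first case (it is foliated by invariant sets of arbitrarily small measure), so this dichotomy is exactly the repair your argument needs: run the tower construction only where it applies, and use small invariant sets directly otherwise.
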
 
\proof[Proof] 
Either $T$ has invariant sets with arbitrarily small measure, or $T$ has an 
ergodic component with positive measure.  Suppose $T$ has disjoint invariant sets $E_m$ 
of positive measure such that $\sum_{m = 1}^{\infty} \mu(E_m) < \infty$.  
Then $T$ is lightly mixing on the dense collection 
\[
\mathcal{D} = \{ D\in \mathcal{B}: \mbox{there exists $m\in \natural$ such that $\bigcup_{i=m}^{\infty} E_i \subset D$}\} . 
\]
Otherwise, if $T$ has an ergodic component of positive measure, then $T$ is lightly mixing 
on a dense collection by Theorem \ref{thm1}. $\Box$ 

\section{Lightly Mixing on a Dense Algebra} 
It is known that the class of rigid, weakly mixing transformations is residual in the space of Lebesgue measure 
preserving transformations on the unit interval, endowed with the weak-* topology.  
Since no rigid transformaition is lightly mixing on a dense algebra, there is a residual class of transformations 
which are not lightly mixing on a dense algebra. 

A transformation is mildly mixing (MM) if it has no rigid factor. 
The notion of mildly mixing was originally introduced by Furstenberg and Weiss in \cite{FurWei78}. 
Here we show that any mildly mixing transformation is lightly mixing on a dense algebra. 
This implies that any transformation with a mildly mixing factor is lightly mixing on a dense algebra. 
In section \ref{exs}, we show that this is not necessary by exhibiting an example of the cartesian product 
of two rigid transformations which is lightly mixing on a dense algebra.  Below, we give a necessary 
and sufficient condition for a transformation to be lightly mixing on a dense algebra.  
A proof of sufficiency is given, but necessity is more straightforward, so it is left to the reader. 

\begin{theorem}
\label{thm3}
Suppose that given $\delta > 0$ there exist disjoint sets $C$ and $D$ satisfying 
$0 < \mu(C), \mu(D) < \delta$ such that $\liminf_{n\to \infty} \mu(T^n C\cap D) > 0$.  
Then there exists a dense algebra $\mathcal{G}$ such that for $G,H \in \mathcal{G}$, 
\[
\liminf_{n\to \infty} \mu (T^n G\cap H) > 0 .
\]
\end{theorem}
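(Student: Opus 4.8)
The plan is to build the dense algebra $\mathcal{G}$ greedily, one generator at a time, mimicking the tower construction from Theorem~\ref{thm1} but now using the hypothesis to insert, for each newly chosen generator, a small ``probe'' set that witnesses a positive $\liminf$ against every earlier generator and its Boolean combinations. Concretely, fix a countable dense sequence $F_1, F_2, \ldots$ in $(\mathcal{B}, d)$. I will inductively construct a nested sequence of finite algebras $\mathcal{G}_1 \subset \mathcal{G}_2 \subset \cdots$ together with auxiliary small sets, so that $F_k$ is approximated to within $2^{-k}$ by an element of $\mathcal{G}_k$, and so that for any two atoms (equivalently, any two positive-measure elements) $G, H$ of $\mathcal{G}_k$ we have $\liminf_n \mu(T^n G \cap H) > 0$. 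Setting $\mathcal{G} = \bigcup_k \mathcal{G}_k$ then gives a dense algebra on which the conclusion holds, since any $G, H \in \mathcal{G}$ lie in a common $\mathcal{G}_k$.

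The key device is the hypothesis applied at a small scale. Suppose at stage $k$ the algebra $\mathcal{G}_{k-1}$ has finitely many atoms $P_1, \ldots, P_m$, each of positive measure. I want to refine by the set $F_k$. Split $F_k$ (and its complement) across the atoms; this produces at most $2m$ new atoms. The danger is that some new atom $P_j \cap F_k$ could be small or ``badly placed'' so that $\liminf_n \mu(T^n(P_j \cap F_k) \cap P_i)$ is zero. To prevent this, for each ordered pair of atoms I apply the hypothesis with $\delta$ very small (smaller than $\min_i \mu(P_i)$ divided by a large factor depending on $m$) to obtain disjoint sets $C, D$ of tiny positive measure with $\liminf_n \mu(T^n C \cap D) > 0$. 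Using ergodicity-free measure-theoretic bookkeeping — actually the cleaner route is to use a Rohlin tower as in Lemma~\ref{lem1}: place a thin ``copy'' $\bar{C}$ inside the source atom spread over a tall tower, so that $\mu(T^n(\text{source}) \cap (\text{target})) \geq \mu(\bar{C})$ for all $n$. Lemma~\ref{lem1} is exactly the tool that converts ``a small set has positive liminf against another small set'' plus ``a tower exhausts the space'' into ``a big set has a uniform lower bound against a big set''; I will invoke it repeatedly. I then adjust each atom of the refinement by a null-to-small amount to absorb these probe sets, keeping the total perturbation under $2^{-k}$ so density of the approximation to $F_k$ is preserved.

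The step I expect to be the main obstacle is the \emph{simultaneous} control: after refining by $F_k$ I have roughly $2m$ atoms, and I need \emph{every} ordered pair to have positive $\liminf$, while the probe sets I insert for one pair must not destroy the estimate for another pair, nor blow up the total measure budget. The resolution is a careful accounting — process the (finitely many) ordered pairs in sequence, each time taking $\delta$ exponentially smaller than the running minimum atom measure, so that the cumulative measure of all inserted probes at stage $k$ is at most $2^{-k}$ and no atom is ever driven to measure zero. A secondary subtlety is that the probe set for the pair $(P, Q)$ is built inside $P$, but then $P$ gets further subdivided at later stages $k' > k$; here I use that Lemma~\ref{lem1}'s lower bound $\mu(\bar{C})$ depends only on $\bar{C}$, and I can always re-thin $\bar{C}$ through a taller tower inside whichever sub-atom I need, so the bound is inherited. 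Once the inductive construction is set up with these bookkeeping conventions, verifying that $\mathcal{G} = \bigcup_k \mathcal{G}_k$ is a dense algebra and that the liminf condition holds on it is routine, so I would present the induction carefully and leave those final verifications brief.
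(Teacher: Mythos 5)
Your overall strategy --- fix a dense sequence in advance, refine finite algebras stage by stage, and plant a ``probe'' pair of small sets inside each ordered pair of atoms so that the positive $\liminf$ passes to supersets --- is a genuinely different route from the paper's, which never refines by a prescribed dense sequence. The paper instead disjointifies a single family of probe pairs $\{C_m,D_m\}$ once and for all, builds generators $F_i$ so that every finite pattern of intersections with the $C_m\cup D_m$ occurs for infinitely many $m$, and gets density for free from the freedom of the generators off $\bigcup_m(C_m\cup D_m)$. Your route could in principle work, but as written it has a genuine gap at exactly the point you call a ``secondary subtlety.'' When an atom $P$ carrying a probe $C$ is subdivided at a later stage, you claim you can ``re-thin $\bar C$ through a taller tower inside whichever sub-atom I need, so the bound is inherited.'' This fails for two reasons. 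First, the property $\liminf_{n\to\infty}\mu(T^nC\cap D)>0$ is not inherited by subsets of $C$ (or of $D$); thinning $C$, by a tower or otherwise, can destroy the $\liminf$. Second, Lemma~\ref{lem1} gives a uniform lower bound only when the \emph{target} set contains $B\cup E$ with $\bigcup_{i=0}^{h-1}T^{-i}B$ of full measure; a small set $D$ disjoint from $C$ cannot be arranged to sweep the space, and if towers alone could manufacture disjoint probe pairs then every ergodic transformation would be LMDA, contradicting the rigid case discussed in the paper. The only legitimate source of probe pairs is the hypothesis itself, which hands you $C$ and $D$ wherever they happen to lie: you may enlarge atoms to swallow them, but you may not relocate or shrink them.

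A second, related gap is in the bookkeeping: probes chosen for different pairs and at different stages will in general overlap, and restoring disjointness forces you to subtract small sets from earlier probes --- which again threatens their $\liminf$'s. The correct budget is therefore not, as you propose, $\delta$ small relative to the current atom measures, but small relative to the values $\epsilon_m=\liminf_{n\to\infty}\mu(T^nC_m\cap D_m)$ of the \emph{previously chosen} probes; this is exactly the paper's condition $\sum_{n=m+1}^{\infty}\mu(\hat C_{h_n}\cup\hat D_{h_n})<\epsilon_{h_m}/4$, which guarantees the disjointified probes still satisfy $\liminf_{n\to\infty}\mu(T^nC_m\cap D_m)\ge \epsilon_{h_m}/2$. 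With that replacement, and with the tower/re-thinning step deleted in favor of simply enlarging sub-atoms to contain freshly drawn probes, your induction can likely be completed; as written, though, the two steps above do not go through.
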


\begin{proof} 
For each $h\in \natural$, choose disjoint sets $\hat{C_h}$ and $\hat{D_h}$ with measure less than 
$1 / h$ such that 
\[
\liminf_{n\to \infty} \mu(\hat{D_h} \cap T^n \hat{C_h}) \geq \epsilon_h > 0 .
\] 
Choose a sequence $h_m$ such that 
\[
\sum_{n=m+1}^{\infty} \mu( \hat{C}_{h_n} \cup \hat{D}_{h_n} ) < \frac{\epsilon_{h_m}}{4} . 
\]
Let 
\[
C_m = \hat{C}_{h_m} \setminus \bigcup_{n=m+1}^{\infty} ( \hat{C}_{h_n} \cup \hat{D}_{h_n} ) 
\] 
and 
\[
D_m = \hat{D}_{h_m} \setminus \bigcup_{n=m+1}^{\infty} ( \hat{C}_{h_n} \cup \hat{D}_{h_n} ) . 
\] 
Thus 
\[
\liminf_{n\to \infty} \mu (T^n C_m \cap D_m ) \geq \frac{\epsilon_{h_m}}{2} . 
\] 
Hence $\{ C_m, D_m : m\in \natural \}$ is a disjoint collection of sets such that 
$\lim_{m\to \infty} \mu(C_m) = \lim_{m\to \infty} \mu(D_m) = 0$ and 
$\liminf_{n\to \infty} \mu(T^n C_m \cap D_m) > 0$ for each $m\in \natural$.  Let 
\[
\mathcal{E} = \{ E\in \mathcal{B}: E \cap (C_m \cup D_m) \in \{ \emptyset, C_m,D_m,C_m\cup D_m \} \} . 
\]
Let two sets $E$ and $F$ be equivalent (written $E \sim F$) if 
$E \cap (C_m \cup D_m) = F\cap (C_m \cup D_m)$ for all but finitely many $m$. 

Let $F_i, i=1,2,\ldots$ be a sequence of sets such that for each positive integer $n$ and 
$f_i \in \{ \emptyset, C_m, D_m, C_m \cup D_m \}$ for $1\leq i\leq n$, the set 
$\{ m : F_i \cap (C_m \cup D_m) = f_i\ \mbox{for}\ 1\leq i\leq n \}$ is infinite.  
Such a sequence may be constructed inductively on $i$.  Let 
\[
\mathcal{F} = \{ F\in \mathcal{B}:\ \mbox{there exists}\ i\ \mbox{such that either}\ 
F\sim F_i\ \mbox{or}\ F\sim F_i^c \} . 
\]
Let $\mathcal{G}$ be the algebra generated by $\mathcal{F}$. 

Let $k$ be a positive integer and let 
$F_i^{\prime}, F_i^{\prime\prime} \in \mathcal{F}$ for $1\leq i\leq k$ be such that 
$F_i^{\prime} \sim F_i$ or $F_i^{\prime} \sim F_i^c$, and $F_i^{\prime\prime} \sim F_i$ 
or $F_i^{\prime\prime} \sim F_i^c$.  
We wish to show that $\liminf_{n\to \infty} \mu(T^n G\cap H) > 0$ where 
$G=\cap_{i=1}^{k} F_i^{\prime}$ and $H=\cap_{i=1}^{k} F_i^{\prime\prime}$.  
It is sufficient to prove that there exists $m\in \natural$ such that 
$C_m \subset G$ and $D_m \subset H$.  
Let 
\[
S = \{ i: F_i^{\prime} \sim F_i^{\prime\prime}, 1\leq i\leq k \} . 
\]
Then the sets $\{ m: F_i^{\prime} \cap (C_m \cup D_m) = C_m \cup D_m$ 
for $i\in S$ and $F_i^{\prime} \cap (C_m\cup D_m) = C_m$ for $i\notin S \}$ 
and 
$\{ m: F_i^{\prime\prime} \cap (C_m \cup D_m) = C_m \cup D_m$ 
for $i\in S$ and $F_i^{\prime\prime} \cap (C_m\cup D_m) = D_m$ for $i\notin S \}$ 
agree for all but a finite number of positive integers.  
Since both sets are infinite, there exists $m\in \natural$ such that 
$C_m \subset \cap_{i=1}^{k} F_i^{\prime}$ and 
$D_m \subset \cap_{i=1}^{k} F_i^{\prime\prime}$ . 
\end{proof}

\begin{corollary}
\label{factor_light}
If a transformation has a factor which is lightly mixing on a dense algebra, then the 
transformation is itself lightly mixing on a dense algebra. 
\end{corollary}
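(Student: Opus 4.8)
The plan is to reduce the statement to Theorem \ref{thm3}. Let $\pi : X \to Y$ be the factor map onto a factor $(Y,\mathcal{A},\nu,S)$ with $\pi \circ T = S \circ \pi$, and assume $S$ is lightly mixing on a dense algebra $\mathcal{G}_0 \subset \mathcal{A}$. The naive attempt—pull $\mathcal{G}_0$ back through $\pi$—does not suffice directly: $\pi^{-1}\mathcal{G}_0$ is indeed an algebra on which $T$ is lightly mixing, but it is only dense inside the invariant sub-$\sigma$-algebra $\pi^{-1}\mathcal{A}$, not in all of $\mathcal{B}$. Instead I would use the pull-back only to verify the hypothesis of Theorem \ref{thm3} for $T$, and then let that theorem construct a dense algebra on all of $\mathcal{B}$.

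So fix $\delta > 0$. First I would produce inside $Y$ two \emph{disjoint} sets $C_Y, D_Y$ with $0 < \nu(C_Y), \nu(D_Y) < \delta$ and $\liminf_{n} \nu(S^n C_Y \cap D_Y) > 0$. This is precisely the necessity direction of Theorem \ref{thm3} applied to $S$ (which the text leaves to the reader); alternatively one argues it by hand—pick two disjoint sets of small positive measure in $\mathcal{A}$, approximate each by a member of $\mathcal{G}_0$, and replace the approximants $G, H$ by $G \setminus H$ and $H \setminus G$, which still lie in the algebra $\mathcal{G}_0$, are genuinely disjoint, and retain small positive measure, so that lightly mixing of $S$ on $\mathcal{G}_0$ delivers the positive $\liminf$.

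Next I would pull these back: set $C = \pi^{-1}(C_Y)$ and $D = \pi^{-1}(D_Y)$. Then $C \cap D = \pi^{-1}(C_Y \cap D_Y) = \emptyset$, and since $\pi$ carries $\mu$ to $\nu$ we get $0 < \mu(C) = \nu(C_Y) < \delta$ and $0 < \mu(D) = \nu(D_Y) < \delta$. The intertwining $\pi \circ T = S \circ \pi$ yields $\pi \circ T^n = S^n \circ \pi$ for every $n \in \mathbb{Z}$, hence $T^n C = T^n \pi^{-1}(C_Y) = \pi^{-1}(S^n C_Y)$, and therefore $\mu(T^n C \cap D) = \mu(\pi^{-1}(S^n C_Y \cap D_Y)) = \nu(S^n C_Y \cap D_Y)$ for all $n$. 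Consequently $\liminf_n \mu(T^n C \cap D) = \liminf_n \nu(S^n C_Y \cap D_Y) > 0$. Since $\delta$ was arbitrary, $T$ satisfies the hypothesis of Theorem \ref{thm3}, which then furnishes a dense algebra $\mathcal{G} \subset \mathcal{B}$ with $\liminf_n \mu(T^n G \cap H) > 0$ for all $G, H \in \mathcal{G}$; that is, $T$ is lightly mixing on a dense algebra.

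The only step needing care is the first one—extracting honestly disjoint sets of controlled, strictly positive measure for the factor $S$, whether by quoting the converse half of Theorem \ref{thm3} or by the $G\setminus H$, $H\setminus G$ trimming—and I expect the bookkeeping there (keeping $\nu(C_Y)$ and $\nu(D_Y)$ both strictly between $0$ and $\delta$ after trimming) to be the main, though minor, obstacle. Everything afterward—measure-preservation of $\pi$, the identity $\pi \circ T^n = S^n \circ \pi$, and the invocation of Theorem \ref{thm3}—is routine.
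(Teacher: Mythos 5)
Your proposal is correct and follows essentially the same route as the paper: the paper's proof likewise observes that the factor supplies disjoint sets of arbitrarily small positive measure that lightly mix with each other (pulled back to the full space via the factor map) and then invokes Theorem \ref{thm3}. Your extra care in extracting genuinely disjoint small sets from the factor's dense algebra (the $G\setminus H$, $H\setminus G$ trimming) fills in a detail the paper leaves implicit, but the argument is the same.
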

\begin{proof}
Suppose that a transformation has a factor which is lightly mixing on a dense algebra. 
Then there exist disjoint sets of arbitrarily small measure which lightly mix with each other.  
Hence, by Theorem \ref{thm3}, the transformation itself is lightly mixing on a dense algebra. 
\end{proof}

\begin{corollary}
\label{sodc_lmda}
If a transformation is sweeping out on a dense collection then the transformation 
is lightly mixing on a dense algebra. 
\end{corollary}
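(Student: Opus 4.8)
The plan is to derive this immediately from Theorem~\ref{thm3}. Let $\mathcal{D}$ be a dense collection of sets on which $T$ is sweeping out. By Theorem~\ref{thm3} it is enough to verify the hypothesis of that theorem: for every $\delta>0$ one must exhibit \emph{disjoint} sets $C$ and $D$ with $0<\mu(C),\mu(D)<\delta$ and $\liminf_{n\to\infty}\mu(T^nC\cap D)>0$.

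So fix $\delta>0$. First I would produce a set $C\in\mathcal{D}$ of small positive measure. Choose any measurable $E$ with $0<\mu(E)<\delta/4$, and use density of $\mathcal{D}$ in the metric $d(A,B)=\mu(A\triangle B)$ to pick $C\in\mathcal{D}$ with $\mu(C\triangle E)<\mu(E)$; then $0<\mu(E)-\mu(C\triangle E)\le\mu(C)\le\mu(E)+\mu(C\triangle E)<\delta/2$. Since $\mu(C)<1$, the complement $X\setminus C$ has positive measure, so I can choose a measurable set $D\subseteq X\setminus C$ with $0<\mu(D)<\delta$. Then $C$ and $D$ are disjoint and each has measure strictly between $0$ and $\delta$.

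It remains to check the $\liminf$ condition for this pair, and that is immediate from the equivalent formulation of the sweeping-out property given in Section~\ref{prelim}: since $C\in\mathcal{D}$ has positive measure and $D\in\mathcal{B}$ has positive measure, $\liminf_{n\to\infty}\mu(T^nC\cap D)>0$. Feeding the pair $(C,D)$ (one such pair for each $\delta$) into Theorem~\ref{thm3} yields a dense algebra $\mathcal{G}$ with $\liminf_{n\to\infty}\mu(T^nG\cap H)>0$ for all $G,H\in\mathcal{G}$, which is precisely the assertion that $T$ is lightly mixing on the dense algebra $\mathcal{G}$.

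I do not expect a real obstacle here: all of the combinatorial work is already carried out in Theorem~\ref{thm3}, and the corollary reduces to a one-line verification of its hypothesis. The only point that needs a little care is the selection of $C$ and $D$ — namely that the dense collection $\mathcal{D}$ contains sets of arbitrarily small positive measure and that the complement of such a set again contains small sets of positive measure — which follows from density of $\mathcal{D}$ together with non-atomicity of the underlying space.
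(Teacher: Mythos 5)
Your proposal is correct and follows essentially the same route as the paper: the paper's proof is the one-line observation that sweeping out on a dense collection yields disjoint sets of arbitrarily small positive measure with $\liminf_{n\to\infty}\mu(T^nC\cap D)>0$ (via the equivalent formulation of sweeping out against arbitrary $B\in\mathcal{B}$), and then invokes Theorem~\ref{thm3}. You simply spell out the details of producing such a pair $(C,D)$, which the paper leaves implicit.
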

\begin{proof}
If a transformation is sweeping out on a dense collection then there exist disjoint sets 
of arbitrarily small measure which lightly mix with each other.  So, again, 
by Theorem \ref{thm3}, the transformation is lightly mixing on a dense algebra. 
\end{proof} 

\begin{theorem}
\label{thm4}
If a transformation $T$ has a mildly mixing factor, then $T$ is lightly mixing 
on a dense algebra. 
\end{theorem}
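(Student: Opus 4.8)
The plan is to combine Theorem~\ref{thm3}, Corollary~\ref{factor_light}, and the classical description of mild mixing from \cite{FurWei78}. By Corollary~\ref{factor_light}, it is enough to show that the mildly mixing factor of $T$ is lightly mixing on a dense algebra; equivalently, it suffices to treat the case in which $T$ itself is mildly mixing. By Theorem~\ref{thm3}, that case reduces to the following: for every $\delta>0$ there exist disjoint sets $C,D$ with $0<\mu(C),\mu(D)<\delta$ and $\liminf_{n\to\infty}\mu(T^nC\cap D)>0$.

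To produce such pairs I would use the characterization of mild mixing by the absence of nontrivial rigid sets: $T$ is mildly mixing iff $\liminf_{n\to\infty}\mu(T^nB\triangle B)>0$ for every $B$ with $0<\mu(B)<1$. Since $T$ preserves $\mu$ we have $\mu(T^nB\triangle B)=2\mu(T^nB\cap B^c)$, so mild mixing says precisely that $\liminf_{n\to\infty}\mu(T^nB\cap B^c)=:2\gamma_B>0$ for every nontrivial $B$. Thus a positive lower limit is automatic for a set paired with its complement, and the task is to trade the large set $B^c$ for a set of measure less than $\delta$, disjoint from a small set $C$, while keeping the $\liminf$ positive.

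I would carry this out inside a Rokhlin tower, which is available since mild mixing implies weak mixing and hence aperiodicity. Fix $\delta>0$, pick a large height $h$, and take a tower with base $F$, height $h$, and error set $E$ with $\mu(E)<\delta$ and every level of measure less than $\delta$. I would choose $B$ to be a union of tower levels for which $B^c$ is $E$ together with a structured, sparse family of levels, apply the mild mixing estimate to $B$ (and to refinements of $B$ on taller towers), and read off from the estimates a single level $C$ and a small set $D\subseteq B^c$ with $C\cap D=\emptyset$ and $\liminf_n\mu(T^nC\cap D)>0$. Since towers of arbitrarily large height with arbitrarily small error exist, this would be performed at every scale $\delta$, which by Theorem~\ref{thm3} finishes the argument.

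I expect the real obstacle to be this last extraction. Mild mixing supplies a $\liminf$ bound only for a set against its own complement, and the crude localization---partitioning $B$ and $B^c$ into finitely many small pieces and invoking the pigeonhole principle---only shows that some pair of pieces has positive \emph{$\limsup$} of the intersection, because different times can be bad for different pieces; this is weaker than the $\liminf$ hypothesis of Theorem~\ref{thm3}. Upgrading it to a genuine $\liminf$ bound for one fixed pair of small sets is the heart of the proof, and I would approach it either via a uniform form of the mild mixing estimate holding simultaneously over a family of level-sets of the tower, or via a recursive construction in which $C$ and $D$ are intersections of approximants on successively finer towers, with mild mixing used at each stage to keep $\mu(T^nC_k\cap D_k)$ bounded below on an expanding range of $n$.
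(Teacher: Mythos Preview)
Your overall framework matches the paper's: reduce via Corollary~\ref{factor_light} to $T$ mildly mixing, then verify the hypothesis of Theorem~\ref{thm3} using the characterisation $\liminf_n\mu(T^nB\cap B^c)>0$ inside a Rokhlin tower. You also correctly isolate the real difficulty: a naive pigeonhole over the levels of the tower only yields, for each $n$, \emph{some} small piece with large intersection, so one gets only a $\limsup$ bound for any fixed pair.

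What is missing is the specific device that converts this into a $\liminf$ bound, and neither of your two proposed routes (a ``uniform'' mild mixing estimate, or a recursive refinement over towers) is how the paper does it; indeed it is not clear either route can be made to work without an additional idea. The paper's trick is structural rather than quantitative. One does \emph{not} take $C$ to be a single level. Instead, inside a tower $\{B,TB,\dots,T^hB\}$ with small error $E$, one lets $C_h$ be a thin union of \emph{full columns}: the forward orbit of each $x\in E$ until it first returns to $B$, together with the next $h$ iterates (or, if $\mu(E)=0$, the full column over a tiny $\hat A\subset B$). Thus $C_h$ is a union of orbit segments each of length at least $h+1$, and one sets $D_h=(B\setminus C_h)\cup(T^hB\setminus C_h)$, the bottom and top levels outside $C_h$. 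Both sets have measure $<2/(h+1)$.

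Mild mixing gives $\epsilon:=\liminf_n\mu(T^nC_h\cap C_h^c)>0$. For each large $n$ choose a level $j_n$ with $\mu\bigl(T^nC_h\cap T^{j_n}B\cap C_h^c\bigr)>\epsilon/(h+2)$. Now comes the point you are missing: because $C_h$ consists of orbit segments of length $\ge h+1$, any $y\in C_h$ has either $T^{-j_n}y\in C_h$ or $T^{h-j_n}y\in C_h$. Applying this with $y=T^{-n}x$ shows that every $x\in T^nC_h\cap T^{j_n}B\cap C_h^c$ can be slid, by the measure-preserving map $T^{-j_n}$ or $T^{h-j_n}$, to a point of $D_h\cap T^nC_h$. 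Hence $\mu(D_h\cap T^nC_h)\ge \epsilon/(h+2)$ for \emph{every} sufficiently large $n$, which is the required $\liminf$ bound. The $n$-dependent choice of level $j_n$ is harmless precisely because $D_h$ contains both extremal levels and the column structure of $C_h$ lets one reach one of them regardless of $j_n$. This geometric ``sliding'' argument is what replaces the pigeonhole-over-time step you flagged as the obstacle.
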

\begin{proof}
By Corollary \ref{factor_light}, it is sufficient to prove this theorem where $T$ is mildly mixing. 
Let $h$ be a positive integer and let $\{ B, TB, \ldots , T^h B, E \}$ 
be a Rohlin tower such that $E = (\bigcup_{i=0}^h T^i B)^c$ satisfies 
$\mu (E) < \frac{\mu (B)}{4(h+1)}$.  
Let $\hat{E} = \{ x \in E : \exists \ i \in \natural\ \mbox{such that}\ T^i x \in B \}$.  
For $x\in \hat{E}$, let 
$i_x = \min{ \{ i \in \natural : \ T^i(x) \in B \} }$. 
If $\mu (E) > 0$, let 
\[
C_h = \bigcup_{x\in \hat{E}} \{ T^i x : 0 \leq i \leq i_x + h \} , 
\]
otherwise, choose $\hat{A} \subset B$ such that 
$0 < \mu (\hat{A}) < \frac{\mu(B)}{4(h+1)}$ and let 
\[
C_h = \bigcup_{i=0}^h T^i \hat{A} . 
\]
Also, let 
\[
D_h = (B\setminus C_h) \cup (T^h B \setminus C_h) . 
\]
Thus $C_h$ and $D_h$ are disjoint sets, both with measure less than 
$2/{(h+1)}$.  Since $T$ is mildly mixing, 
\[
\epsilon := \liminf_{n\to \infty} \mu (T^n C_h \cap C_h^c ) > 0 . 
\]
For $n$ sufficiently large choose $j_n$ with $0\leq j_n \leq h$ such that 
\[
\mu( T^n C_h \cap (T^{j_n} B\cap C_h^c )) > \frac{\epsilon}{h + 2} . 
\]
For each $x\in T^n C_h \cap (T^{j_n} B\cap C_h^c )$, either 
$T^{-j_n} x \in T^n C_h$ or $T^{h-j_n}x \in T^n C_h$.  
Either way, there exists a measure preserving map $\phi$ such that for each 
$x\in T^n \hat{C}_h \cap (T^{j_n} B \cap \hat{C}_h^c )$, $\phi (x) \in \hat{D}_h \cap T^n \hat{C}_h$.  
Therefore, 
\[
\liminf_{n\to \infty} \mu (\hat{D}_h \cap T^n \hat{C}_h ) \geq \frac{\epsilon}{h + 2} . 
\]
\end{proof} 

\section{Sweeping Out}
A measure preserving transformation $T$ is sweeping out on a set $A$ if given any 
sequence $k_1, k_2, \ldots$ of distinct integers, 
\[
\mu (\bigcup_{i=1}^{\infty} T^{k_i} A) = 1 . 
\]
A transformation $T$ is sweeping out on a collection $\mathcal{D}$ of sets if $T$ is 
sweeping out on each set $A\in \mathcal{D}$.  In the previous section, we proved 
that sweeping out on a dense collection implies lightly mixing on a dense algebra.  
In order to see that the converse is not true, we give a general condition on the limit joinings 
of a transformation which implies that it does not sweep out on any dense collection 
of sets.  Then we point out in section \ref{exs} that Chacon's transformation, which is 
mildly mixing, satisfies this condition.  Since any mildly mixing transformation is 
lightly mixing on a dense algebra, then Chacon's transformation is lightly mixing 
on a dense algebra, but does not sweep out on a dense collection. 

Before we do this, we will give an equivalent condition for a transformation 
to be sweeping out on a dense algebra.  Then we use this condition to show 
that any transformation with a factor that sweeps out on a dense algebra, is itself 
sweeping out on a dense algebra. 

\begin{theorem}
\label{thm5}
A transformation $T$ is sweeping out on a dense algebra if and only if there 
exist disjoint sets $C_m$ for $m\in \natural$ such that for each $m$, $T$ 
sweeps out on $C_m$. 
\end{theorem}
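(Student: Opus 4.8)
The plan is to prove both directions by exploiting the combinatorial structure already developed in the proof of Theorem \ref{thm3}. The forward direction is essentially a weakening of what Theorem \ref{thm3} gives: if $T$ sweeps out on a dense algebra $\mathcal{G}$, then for any $\delta > 0$ one can find a set $A \in \mathcal{G}$ with $0 < \mu(A) < \delta$ on which $T$ sweeps out (indeed, take any positive-measure set of small measure and note that inside a dense algebra one finds such sets by splitting); then the disjoint sets $C_m$ are produced by a disjointification exactly as in the opening paragraphs of the proof of Theorem \ref{thm3} — one chooses sets $\hat{C}_{h_m}$ of rapidly decreasing measure on which $T$ sweeps out and replaces each by $C_m = \hat{C}_{h_m} \setminus \bigcup_{n > m} \hat{C}_{h_n}$, observing that sweeping out on a set is inherited by any subset of positive measure only in the appropriate sense, so care is needed: removing a small set from a sweeping-out set leaves a set that still sweeps out, since $\mu(\bigcup_i T^{k_i} C_m) \geq \mu(\bigcup_i T^{k_i} \hat{C}_{h_m}) - \sum_{n > m} \sum_i \mu(T^{k_i}(\hat{C}_{h_n})) $ — this last estimate is not literally finite, so instead one argues that if $C_m$ failed to sweep out then $\hat{C}_{h_m}$ would fail to sweep out up to a null set, contradicting the choice. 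I would phrase this last point as: the complement of $\bigcup_i T^{k_i} C_m$ is contained in the complement of $\bigcup_i T^{k_i}\hat{C}_{h_m}$ together with $\bigcup_{n>m}\bigcup_i T^{k_i}(\hat{C}_{h_n}\cap\hat{C}_{h_m})$, and the first has measure zero while handling the second requires the rapid-decrease hypothesis together with sweeping out applied to a sub-subsequence.

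For the reverse direction, I would mimic the construction of the algebra $\mathcal{G}$ in Theorem \ref{thm3} verbatim, with the sets $C_m$ now playing the role of both $C_m$ and $D_m$ there (we do not need a paired $D_m$ since sweeping out of $C_m$ against an arbitrary measurable target is exactly the definition). Concretely, set $\mathcal{E} = \{E \in \mathcal{B} : E \cap C_m \in \{\emptyset, C_m\}\}$, define the equivalence $E \sim F$ when $E \cap C_m = F \cap C_m$ for all but finitely many $m$, choose a sequence $F_i$ so that every finite pattern of memberships $F_i \cap C_m \in \{\emptyset, C_m\}$ is realized on an infinite set of indices $m$, let $\mathcal{F} = \{F : F \sim F_i \text{ or } F \sim F_i^c \text{ for some } i\}$, and let $\mathcal{G}$ be the algebra generated by $\mathcal{F}$. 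This algebra is dense for the same reason it is dense in Theorem \ref{thm3}: the sets $C_m$ shrink to measure zero, so one can approximate any measurable set arbitrarily well by an element of $\mathcal{F}$. To verify sweeping out on a generator $G = \bigcap_{i=1}^k F_i'$ with each $F_i' \sim F_i$ or $F_i^c$, I would show $G$ contains some $C_m$: the set of indices $m$ for which $F_i' \cap C_m = C_m$ for all $i \leq k$ is, up to a finite set, the set of $m$ realizing a fixed finite pattern, hence infinite, so pick such an $m$ with $\mu(C_m)$ small enough; since $C_m \subset G$ and $T$ sweeps out on $C_m$, it sweeps out on $G$. Then sweeping out passes from generators to the whole algebra because a general element of $\mathcal{G}$ is a finite union of such intersections $G_j$, and $\bigcup_i T^{k_i}(\bigcup_j G_j) \supseteq \bigcup_i T^{k_i} G_1$, which already has full measure.

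The main obstacle I anticipate is the forward direction's disjointification step: unlike in Theorem \ref{thm3} where the target measure bound $\epsilon_h$ is what must be preserved under the small perturbation, here the property "sweeps out" is not obviously stable under subtracting a small set, because sweeping out is a statement quantified over \emph{all} sequences $k_i$ simultaneously, and the error term $\bigcup_{n>m}\bigcup_i T^{k_i}\hat{C}_{h_n}$ involves infinitely many translates. The resolution I would pursue is to not insist the $\hat{C}_{h_m}$ sweep out in isolation, but rather to observe that sweeping out on a set $A$ is equivalent (via the second characterization of sweeping out in Section \ref{prelim}) to $\liminf_n \mu(T^n A \cap B) > 0$ for every positive-measure $B$, which \emph{is} stable: if $\liminf_n \mu(T^n \hat{C}_{h_m} \cap B) \geq c > 0$ and $\mu(\hat{C}_{h_m} \setminus C_m) < \eta$, then $\mu(T^n C_m \cap B) \geq \mu(T^n \hat{C}_{h_m} \cap B) - \eta$, so taking the $\hat{C}_{h_m}$ with $\sum_{n>m}\mu(\hat{C}_{h_n})$ small relative to a lower bound that one extracts beforehand does the job — though since the lower bound $c$ depends on $B$, one must instead note that $\mu(T^n C_m \cap B) > 0$ for all large $n$ and all $B$ of positive measure, which by the remark in Section \ref{prelim} already forces $\liminf_n \mu(T^n C_m \cap B) > 0$; and "$\mu(T^n C_m \cap B) > 0$ for all large $n$" does survive the subtraction as long as $\mu(\hat{C}_{h_m}\setminus C_m)$ can be made smaller than any prescribed positive quantity, which the rapid-decrease choice guarantees. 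I would therefore organize the forward proof around this pointwise-in-$n$ positivity criterion rather than around uniform $\liminf$ bounds.
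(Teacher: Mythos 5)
Your reverse direction is essentially the paper's own proof: the paper builds the same equivalence relation, the same family $\mathcal{F}$ (with the pattern-realizing sets $F_i$ given explicitly as $F_i=\bigcup_{m}\bigcup_{j=1}^{2^i}C_{m2^{i+1}+j}$), and concludes exactly as you do that each generator $G=\bigcap_{i=1}^k F_i^{\prime}$ contains some $C_m$ and hence is swept out. That part is fine, and your observation that no paired $D_m$ is needed is exactly the simplification the paper makes.

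The forward direction, however, contains a genuine gap. Your disjointification $C_m=\hat{C}_{h_m}\setminus\bigcup_{n>m}\hat{C}_{h_n}$ uses an infinite union, so $C_m$ leaves the algebra $\mathcal{G}$, and you are forced to argue that sweeping out survives the removal of a small set. It does not, by any argument of the kind you sketch: your final criterion requires $\mu(T^nC_m\cap B)>0$ for all large $n$ and \emph{all} positive-measure $B$, and the inequality $\mu(T^nC_m\cap B)\geq\mu(T^n\hat{C}_{h_m}\cap B)-\mu(\hat{C}_{h_m}\setminus C_m)$ only yields positivity when $\mu(\hat{C}_{h_m}\setminus C_m)$ is smaller than $\liminf_n\mu(T^n\hat{C}_{h_m}\cap B)$. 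That liminf depends on $B$ and its infimum over all positive-measure $B$ is $0$ (take $\mu(B)$ smaller than the deleted mass), so no fixed choice of the deleted set works for every $B$; the quantifiers are in the wrong order and the "prescribed positive quantity" is prescribed only after $B$ is chosen. Indeed, sweeping out on a set is in general not inherited by subsets of nearly full relative measure (Example 2 of the paper gives a transformation for which $X\times Y$ sweeps out while some positive-measure subset does not), so no soft stability argument can close this. The repair is much simpler than what you attempt: since $\mathcal{G}$ is an \emph{algebra}, disjointify with finitely many set operations so as never to leave it. For instance, construct a decreasing chain $A_1\supset A_2\supset\cdots$ in $\mathcal{G}$ by repeatedly splitting: given $A_m\in\mathcal{G}$ of positive measure, use density of $\mathcal{G}$ to find $H\in\mathcal{G}$ with both $A_m\cap H$ and $A_m\setminus H$ of positive measure, and set $A_{m+1}=A_m\cap H$, $C_m=A_m\setminus H$. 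Each $C_m$ lies in $\mathcal{G}$, so $T$ sweeps out on it by hypothesis, and the $C_m$ are disjoint. No perturbation lemma is needed.
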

\begin{proof}
The proof is similar to the proof of Theorem \ref{thm3}, except we let 
\[
\mathcal{E} = \{ E\in \mathcal{B} : E\cap C_m \in \{ \emptyset, C_m\}\} . 
\]
Let two sets $E$ and $F$ be equivalent (written $E\sim F$) if $E\cap C_m = F\cap C_m$ 
for all but finitely many $m$. 

For $i=1,2,\ldots$ , define 
\[
F_i = \bigcup_{m=0}^{\infty} \bigcup_{j=1}^{2^i} C_{m2^{i+1}+j} . 
\]
Let 
\[
\mathcal{F} = \{ F\in \mathcal{B}:\mbox{ there exists $i$ such that either } F\sim F_i\mbox{ or } F\sim F_i^c \} . 
\]
Let $\mathcal{G}$ be the algebra generated by $\mathcal{F}$. 

To see that $T$ sweeps out on $\mathcal{G}$ let $k$ be a positive integer and let 
$F_i^{\prime} \in \mathcal{F}$ for $1\leq i\leq k$ be such that 
$F_i^{\prime} \sim F_i$ or $F_i^{\prime} \sim F_i^c$.  From the definition of $F_i$, 
$G = \bigcap_{i=1}^k F_i^{\prime}$ contains infinitely many $C_m$.  Therefore, $T$ is sweeping out on $G$. 
\end{proof}

\begin{corollary}
\label{factor_cor}
If a transformation has a factor which is sweeping out on a dense algebra, then the transformation itself 
is sweeping out on a dense algebra.  Moreover, if a transformation has a factor which satisfies 
LMDC, LMDA, SODC or SODA, then the transformation satisfies the same property. 
\end{corollary}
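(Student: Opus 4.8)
The plan is to handle the first sentence and the four properties of the ``moreover'' through one common device. Fix a factor map $\pi\colon(X,\mathcal{B},\mu,T)\to(Y,\mathcal{C},\nu,S)$, so that $\pi^{-1}$ sends $\mathcal{C}$ into $\mathcal{B}$, preserves measure, respects all Boolean operations, and satisfies $T^{n}\pi^{-1}(E)=\pi^{-1}(S^{n}E)$ for every $n$ and every $E\in\mathcal{C}$. Two consequences will be used repeatedly. First, for any $E\in\mathcal{C}$ and any sequence $k_{1},k_{2},\ldots$,
\[
\mu\Big(\bigcup_{i}T^{k_{i}}\pi^{-1}(E)\Big)=\mu\Big(\pi^{-1}\big(\bigcup_{i}S^{k_{i}}E\big)\Big)=\nu\Big(\bigcup_{i}S^{k_{i}}E\Big),
\]
so $T$ sweeps out on $\pi^{-1}(E)$ whenever $S$ sweeps out on $E$. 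Second, since preimages of disjoint sets are disjoint, $S$-sweeping-out on a disjoint family lifts to $T$-sweeping-out on a disjoint family.

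For the first sentence, assume $S$ is sweeping out on a dense algebra. By Theorem~\ref{thm5} there are disjoint $C_{m}\in\mathcal{C}$, $m\in\natural$, with $S$ sweeping out on each $C_{m}$; then the $\pi^{-1}(C_{m})$ are disjoint and $T$ sweeps out on each, so Theorem~\ref{thm5} applied to $T$ shows that $T$ is sweeping out on a dense algebra. This settles the SODA case. The LMDA case is precisely Corollary~\ref{factor_light}, so it needs no further argument; and the LMDC case holds trivially, since by Theorem~\ref{thm2} \emph{every} measure preserving transformation on a separable probability space is already lightly mixing on a dense collection.

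It remains to treat SODC. Let $\mathcal{D}\subset\mathcal{C}$ be a dense collection on which $S$ sweeps out. I will use two elementary facts: (i) sweeping out is monotone in the set, because $D\subseteq D'$ forces $\bigcup_{i}S^{k_{i}}D'\supseteq\bigcup_{i}S^{k_{i}}D$; and (ii) a dense subcollection of $\mathcal{C}$ contains sets of arbitrarily small positive measure (if every element of $\mathcal{D}$ of measure below some $\varepsilon_{0}$ were null, no element of $\mathcal{D}$ could lie within $\varepsilon_{0}/2$ of a set of measure $\varepsilon_{0}/2$). Set
\[
\mathcal{D}_{X}=\{A\in\mathcal{B}:\ \pi^{-1}(D)\subseteq A\ \text{for some}\ D\in\mathcal{D}\ \text{with}\ \nu(D)>0\}.
\]
By the first displayed identity together with (i), $T$ sweeps out on every member of $\mathcal{D}_{X}$. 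By (ii), $\mathcal{D}_{X}$ is dense: given $A\in\mathcal{B}$ and $\varepsilon>0$, choose $D\in\mathcal{D}$ with $0<\nu(D)<\varepsilon$; then $A\cup\pi^{-1}(D)\in\mathcal{D}_{X}$ and $\mu\big((A\cup\pi^{-1}(D))\triangle A\big)\le\mu(\pi^{-1}(D))=\nu(D)<\varepsilon$. Hence $T$ is sweeping out on a dense collection, completing the proof.

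There is no genuinely hard step: the content lies entirely in the behaviour of $\pi^{-1}$ under iteration, in Theorem~\ref{thm5}, and in Theorem~\ref{thm2}. The one place calling for care is the density of $\mathcal{D}_{X}$ in the SODC case --- that is, checking that the lifted collection, after closing it up under supersets, is dense in all of $\mathcal{B}$ and not merely in $\pi^{-1}(\mathcal{C})$; this rests on fact (ii), which is where a tacit non-degeneracy of the factor (non-atomicity) is used. Note also that this superset enlargement is unavailable for SODA and LMDA, since supersets of $\pi^{-1}(D)$ are not closed under complementation and their complements need not sweep out or lightly mix; that is precisely why those two cases must be routed through Theorem~\ref{thm5} and Corollary~\ref{factor_light}.
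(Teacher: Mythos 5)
Your proof is correct and follows essentially the same route as the paper: SODA via the disjoint-sets criterion of Theorem~\ref{thm5}, LMDA via Corollary~\ref{factor_light}, and SODC via the collection of supersets of lifted sets from the factor's dense collection (you also supply the density check that the paper leaves implicit). The only divergence is the LMDC case, where you observe that Theorem~\ref{thm2} already makes the conclusion automatic for \emph{every} measure preserving transformation --- a slightly cleaner route than the paper's, which runs LMDC through the same superset construction as SODC.
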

\begin{proof}
Property LMDA is handled in Corollary \ref{factor_light}.  If a transformation has a factor which is SODA, 
then the transformation satisfies the equivalent condition of Theorem \ref{thm5}, 
and hence the transformation is SODA.  

Suppose a transformation has a factor which satisifies LMDC or SODC.  
Let $\mathcal{D}$ be the dense collection 
for which the factor is LMDC or SODC.  Then the transformation is LMDC or SODC, 
respectively, on the dense collection 
\[
\{ A\in \mathcal{B} : \mbox{ there exists $D\in \mathcal{D}$ such that } D\subset A \} . 
\]
\end{proof}

\begin{theorem}
\label{thm6}
Let $T$ be measure preserving on $(X, \mathcal{B}, \mu )$.  
If there exists a positive integer $N$ and real numbers $a_i$ for 
$i = 0,\ldots ,N$ such that $T$ has the limit joining $\nu$ given by 
\[
\nu (A\times B) = \sum_{i=0}^N a_i \mu (T^{-i} A\cap B) , 
\]
then $T$ does not sweep out on a dense collection of sets. 
\end{theorem}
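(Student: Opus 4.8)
The plan is to show directly that for \emph{every} dense collection $\mathcal{D}\subseteq\mathcal{B}$ there exist a set $S\in\mathcal{D}$ of positive measure and a set $B\in\mathcal{B}$ of positive measure with $\liminf_{n\to\infty}\mu(T^nS\cap B)=0$; by the equivalent formulation of sweeping out recorded in Section~\ref{prelim}, this shows $T$ is not sweeping out on $\mathcal{D}$, and since $\mathcal{D}$ is arbitrary this is exactly the assertion of the theorem. The key observation is that a \emph{limit} joining of the prescribed form is concentrated on the finitely many graphs of $T^{0},T^{1},\dots,T^{N}$: if $S$ is chosen so small that the $N+1$ sets $T^{-i}S$, $0\le i\le N$, cannot cover $X$, then the complement of their union is a positive-measure set that $T^{n_k}S$ asymptotically misses along the sequence $n_k$ realizing $\nu$.

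First I would fix an arbitrary dense collection $\mathcal{D}$ and choose $S\in\mathcal{D}$ with $0<\mu(S)<\tfrac{1}{N+1}$; such an $S$ exists because $\mathcal{D}$ is dense and the space carries sets of arbitrarily small positive measure (the degenerate purely atomic case, in which $T$ is periodic, can be disposed of separately and is immediate). Put $U=\bigcup_{i=0}^{N}T^{-i}S$, so that $\mu(U)\le\sum_{i=0}^{N}\mu(T^{-i}S)=(N+1)\mu(S)<1$, and set $B=X\setminus U$, a set of positive measure. By construction $B\cap T^{-i}S=\emptyset$, hence $\mu(T^{-i}S\cap B)=0$, for each $i\in\{0,\dots,N\}$, and therefore
\[
\nu(S\times B)=\sum_{i=0}^{N}a_i\,\mu(T^{-i}S\cap B)=0 .
\]

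To conclude, I would invoke the hypothesis that $\nu$ is a limit joining of $T$: there is a sequence $n_k\to\infty$ along which the off-diagonal self-joinings converge to $\nu$, so that $\mu(T^{n_k}S\cap B)\to\nu(S\times B)=0$. Consequently $\liminf_{n\to\infty}\mu(T^{n}S\cap B)=0$ while $\mu(S)>0$ and $\mu(B)>0$, so $T$ fails to sweep out on $S$, hence on $\mathcal{D}$. As $\mathcal{D}$ was an arbitrary dense collection, $T$ does not sweep out on a dense collection of sets.

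The argument is short, and the only genuinely delicate point is the precise reading of ``$T$ has the limit joining $\nu$'': I am using that the convergence of the off-diagonal joinings along $n_k$ yields $\mu(T^{n_k}S\cap B)\to\nu(S\times B)$ on the particular rectangle $S\times B$. With the operational meaning of limit joining appropriate here (convergence of $\mu(T^{n}A\cap B)$ for all $A,B\in\mathcal{B}$) this is immediate; in a more topological formulation one would run the estimate on a fixed countable generating algebra, or on a convenient compact model, and extend. A minor secondary point, already flagged, is guaranteeing that the arbitrary dense collection actually meets the measures below $\tfrac{1}{N+1}$, which fails only in trivial finite situations where periodicity of $T$ settles the claim at once.
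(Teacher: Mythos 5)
Your proposal is correct and follows essentially the same route as the paper's proof: pick a set of measure less than $\tfrac{1}{N+1}$ (the paper uses $\tfrac{1}{2(N+1)}$), take $B$ to be the complement of $\bigcup_{i=0}^{N}T^{-i}A$ so that $\nu(A\times B)=0$, and conclude that $\mu(T^{k_n}A\cap B)\to 0$ along the sequence realizing the limit joining. Your additional care in verifying that an arbitrary dense collection must contain a set of small positive measure (and in flagging the degenerate atomic case) is a point the paper leaves implicit, but it does not change the argument.
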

\begin{proof}
Suppose $k_n$ is a sequence such that 
\begin{eqnarray}
\lim_{n\to \infty} \mu (T^{k_n} A\cap B) = \nu (A\times B) = 
\sum_{i=0}^{N} a_i \mu (T^{-i} A\cap B) 
\end{eqnarray} 
for all sets $A$ and $B$ in $\mathcal{B}$.  Let $A$ be any set such that 
$0 < \mu(A) < \frac{1}{2(N+1)}$.  
Define $B = (\bigcup_{i=0}^N T^{-i} A)^c$. 
Thus $\lim_{n\to \infty} \mu (T^{k_n} A\cap B) = \nu (A\times B) = 0$. 
\end{proof} 

\section{Examples}
\label{exs}
In this section, we give four examples which are intended to compliment and 
contrast with the theorems in the previous sections. 

\subsection{Example 1}
Chacon's transformation may be defined inductively using blocks $B_n$.  
Let $B_1 = (0)$ and let $B_{n+1} = B_n B_n 1 B_n$.  
If $k_n$ is the length of $B_n$, then it is not difficult to verify 
for all sets $A$ and $B$, 
\begin{eqnarray}
\label{joineq1}
\nu (A\times B) &=& \lim_{n\to \infty} \mu (T^{k_n} A\cap B) \\ 
&=& \frac12 \mu (A\cap B) + \frac12 \mu (T^{-1} A\cap B) . \label{joineq2}
\end{eqnarray}
Therefore, by Theorems \ref{thm4} and \ref{thm6}, Chacon's transformation is lightly mixing 
on a dense algebra, but not sweeping out on a dense collection. 
Note that if equations \ref{joineq1} and \ref{joineq2} above are true for a dense collection 
then it is true for all sets in the sigma algebra. 


\subsection{Example 2}
\label{ex2} 
Let $R$ be a lightly mixing transformation on $(X, \mathcal{B}, \mu)$ and let $S$ be a weak 
mixing transformation on $(Y, \mathcal{C}, \nu)$, which is not lightly mixing. 
Since the cartesian product $R\times S$ has a factor which sweeps out, then by 
Corollary \ref{factor_cor}, $R\times S$ sweeps out on a dense algebra.  
But, since $S$ is not lightly mixing, then $R\times S$ is not lightly mixing. 

A stronger sweeping out condition was introduced by Friedman in \cite{Fri83} called 
uniform sweeping out.  A transformation $T$ is uniformly sweeping out on a set $A$ if given 
$\epsilon > 0$, there exists $N$ such that for distinct integers $k_1, k_2,\ldots ,k_N$, 
we have 
\[
\mu (\bigcup_{i=1}^N T^{k_i} A) > 1 - \epsilon . 
\] 
A transformation is uniformly sweeping out (USO) if it is uniformly sweeping out 
on each set of positive measure.  It was shown in \cite{Fri83} that if a transformation is 
mixing then it is uniformly sweeping out. 

Just as with other sweeping out conditions we have considered to this point, 
a transformation, which has a factor which is uniformly sweeping out on a dense 
collection (USODC) or dense algebra (USODA), is itself uniformly sweeping out 
on a dense collection or dense algebra, respectively.  Thus, 
by forming products $R\times S$ where $R$ is a mixing transformation 
and $S$ is various other transformations, we can construct the following examples: 
\begin{enumerate}
\item a partially mixing transformation which is USODA, but not USO; 
\item a mildly mixing transformation which is USODA, but not LM; 
\item a weakly mixing transformation which is USODA, but not MM. 
\end{enumerate} 

\subsection{Example 3} 
\label{ex3} 
Below we describe how to construct a transformation which sweeps out on a dense collection, 
but which does not have a mildly mixing factor. 
Let $R_1$ and $R_2$ be rigid transformations with mixing sequences 
$M_1 = \{ m_{1,1}, m_{1,2}, \ldots \}$ and $M_2 = \{ m_{2,1}, m_{2,2}, \ldots \}$, 
respectively, such that $M_1 \cup M_2 = \natural$.  
Explicit transformations may be constructed using cutting and stacking: 
initially, build $R_1$ and $R_2$ as mixing transformations; then introduce a rigid time 
into $R_1$ while keeping $R_2$ mixing.  Then resume a mixing construction on $R_1$ 
and introduce a rigid time into $R_2$.  This routine may be carried out ad infinitum. 
The result is that $R_1 \times R_2$ will sweep out on the dense collection 
\[
\{ A : \exists \mbox{ $E$ and $F$ of positive measure such that } 
(E\times Y) \cup (X\times F) \subset A \} . 
\]

\subsection{Sweeping Out on a Residual Class} 
Each of examples \ref{ex2} and \ref{ex3} is sweeping out on a residual class of sets. 
Let us define the class for example \ref{ex2}.  Example \ref{ex3} may be defined 
in a similar manner.  Let 
\[
\mathcal{D}_n = \{ A : \exists E > 0\mbox{ and } A^{\prime} \mbox{ such that } 
E\times Y \subset A^{\prime}, \mu(A\triangle A^{\prime}) < \frac{1}{n} \mu (E) \} . 
\]
For each $n \in \natural$, the set $\mathcal{D}_n$ is open and dense.  
Hence $\mathcal{D} = \bigcap_{n=1}^{\infty} \mathcal{D}_n$ is residual.  
To prove $R\times S$ is sweeping out on $\mathcal{D}$, let $A \in \mathcal{D}$ 
and $k_1, k_2, \ldots$ be an infinite sequence of integers.  Given $\epsilon > 0$, 
we may choose $E \in \mathcal{B}$ of positive measure such that 
$\mu \times \mu (A\cap E\times Y) > (1-\epsilon )\mu (E)$.  
Thus the set $F = \{ x\in E : \mu \{ y : (x,y) \in A \} > 1 - \epsilon \}$ 
has positive measure.  Since $R$ is sweeping out, then 
\[
\mu \times \mu (\bigcup_{i=1}^{\infty} (R\times S)^{k_i} A ) > 
(1-\epsilon) \mu (\bigcup_{i=1}^{\infty} R^{k_i} F ) = 1-\epsilon . 
\]
Since $\epsilon > 0$ is arbitrary, $R \times S$ is sweeping out on $\mathcal{D}$. 

\subsection{Example 4}
Here we give an example of a lightly mixing transformation $T$ such that 
there exists a set $A$ of positive measure such that given $\delta > 0$ 
there exists a set $B$ of positive measure such that 
$\liminf_{n\to \infty} \mu (T^n A\cap B) < \delta \mu (B)$.  This example is 
important since it says that the lightly mixing condition does not imply a partial 
mixing type condition if we fix one of the sets in advance. 
In \cite{Ada93}, a lightly mixing transformation $T_r$ is constructed for each 
$r = 2, 3, \ldots$ which is $1 - ({1} / {r} )$ rigid.  
Choose a sequence $r_m$ such that 
\[
\prod_{m=1}^{\infty} ( 1 - \frac{1}{\sqrt{r_m}} ) > 0 . 
\]
Define 
\[
T = T_{r_1} \times T_{r_2} \times \ldots 
\]
and let $\overline{\mu} = \mu \times \mu \times \ldots$ 
be the invariant product measure for $T$.  
The transformation $T$ is lightly mixing, since the infinite cartesian product 
of lightly mixing transformations is lightly mixing \cite{King88}.  
Let $A = A_1 \times A_2 \times \ldots$ where 
\[
\mu (A_m) = 1 - \frac{1}{\sqrt{r_m}} . 
\]
For each $m \in \natural$, let 
\[
B_m = X \times \ldots \times X \times A_m^{c} \times X \times \ldots 
\]
Therefore, 
\begin{align*}
\liminf_{n\to \infty} \overline{\mu} (T^n A\cap B_m ) &\leq \frac{1}{r_m} \\ 
&= \frac{1}{r_m} \sqrt{r_m} \mu (B_m) = \frac{1}{\sqrt{r_m}} \mu (B_m) . 
\end{align*}

\subsection{Remark 1}
While the previous example shows that there is no type 
of partially mixing condition, if one merely fixes the set $A$ in advance.  
However, we do not know if there is a lower bound on the $\liminf$ if one fixes 
the set $A$ and places a lower bound on the measure of the set $B$.  
Also, we would find it interesting if there were general conditions which would 
allow one to extend lightly mixing from a dense collection of sets to the full 
sigma algebra. 

\subsection{Remark 2} 
In this remark, we show that 
any mildly mixing (hence weakly mixing) transformation $T$ has a realization 
that is lightly mixing on intervals.  As a consequence, this realization is topologically mixing. 
Theorem \ref{thm4} establishes that there exists 
a dense algebra $\mathcal{C}$ such that $T$ is lightly mixing on $\mathcal{C}$.  
Let $\lambda$ be Lebesgue measure on $[0,1)$. 
There exists an invertible measure space isomorphism 
$\phi : [0,1) \to X$ between $([0,1), \lambda)$ and $(X, \mu)$ 
such that for any interval $I\subset [0,1)$, 
the set $\phi (I)$ contains an element from $\mathcal{C}$.  
Hence, the transformation $\phi^{-1} \circ T \circ \phi$ is 
mildly mixing and lightly mixing on intervals.  
If $T$ is chosen as a non-lightly mixing transformation 
(i.e., Chacon-3 transformation), then this provides 
a general method for extending the examples given in \cite{MRSZ}. 

\subsection{Remark 3}
Both Theorem \ref{thm2} and Theorem \ref{thm4} may be extended to any group 
action of measure preserving transformations which have a weak type 
of Rohlin property.  For example, Theorem \ref{thm2} and \ref{thm4} hold 
for amenable groups, by using the characterization of Rohlin's Lemma 
given in \cite{OW87}.  Any topological group $G$ with measure preserving action $T$ 
is lightly mixing on a collection $\mathcal{C}$ if for all sets $A$ and $B$ in $\mathcal{C}$ 
with positive measure, 
\[
\liminf_{g \to \infty} \mu (T^g A \cap B) > 0 , 
\]
where $g \to \infty$ means eventually leave any compact set in $G$.  
The mildly mixing condition may be replaced by 
\[
\liminf_{g \to \infty} \mu (T^g A \cap A^c) > 0 , 
\] 
for all sets $A$ of positive measure. 

\subsection{Remark 4} 
We do not handle the case of general nonsingular actions.  
Although there is a version of Rohlin's Lemma in this case 
\cite{Kopf79, OW87}, it is not immediately clear that the sets may be chosen 
so that the $\liminf$ is positive. 

\subsection{Remark 5} 
This paper was adapted from a preprint written in 1998 by the same author 
entitled {\it Lightly mixing on dense collections}. 


\end{document}